\newtheorem{theorem}{Theorem}[section]
\newtheorem{lemma}[theorem]{Lemma}
\newtheorem{proposition}[theorem]{Proposition}
\newtheorem{corollary}[theorem]{Corollary}
\theoremstyle{definition}
\newtheorem{definition}[theorem]{Definition}
\numberwithin{equation}{section}
\newcommand\R {{\mathbb R}}
\author[K. K.]{Karin U. Katz}\address{K. Katz, Department of
Mathematics, Bar Ilan University, Ramat Gan 52900
Israel}\email{katzmik@math.biu.ac.il}
\author[M. K.]{Mikhail G. Katz} \address{M. Katz, Department of
Mathematics, Bar Ilan University, Ramat Gan 52900
Israel}\email{katzmik@macs.biu.ac.il}
\author[D. K.]{Dmitry Kerner}\address{D. Kerner, Department of
Mathematics, Ben Gurion University of the Negev, Be'er Sheva 8410501
Israel}\email{dmitry.kerner@gmail.com}
\author[Y. L.]{Yevgeny Liokumovich} \address{Y. Liokumovich, Imperial
College, London}\email{y.liokumovich@imperial.ac.uk}
\begin{document}

\thispagestyle{empty}


\title {Determinantal variety and normal embedding}

\keywords{Determinantal variety, intrinsic metric, bilipschitz
equivalence, conical stratification}

\subjclass[2010]{53C23, 58A35, 32S60}

\begin{abstract}
The space~$\emph{GL}^+_n$ of matrices of positive determinant inherits
an extrinsic metric space structure from~${\mathbb R}^{n^2}$. On the
other hand, taking the infimum of the lengths of all paths connecting
a pair of points in~$\emph{GL}^+_n$ gives an intrinsic metric.  We
prove bilipschitz equivalence between intrinsic and extrinsic metrics
on~${\it GL}^+_n$, exploiting the conical structure of the
stratification of the space of~$n\times n$ matrices by rank.
\end{abstract}

\maketitle


\section{Introduction}

Consider the group~${\it GL}^+(n,\R)$ of~$n\times n$ matrices of
positive determinant. It is an open submanifold of 
the space~$\R^{n^2}$ of~$n\times n$ matrices.  
Here~${\it GL}^+(n,\R)$ carries two metrics: its extrinsic ambient
Euclidean metric with distance function~$d_{ext}$, and the induced
intrinsic metric (i.e. least length of path) with distance
function~$d_{int}$.  Our main result is the following.
\begin{theorem}
\label{tm:main}
There is a constant~$C = C(n)$ such that~$d_{int}<C d_{ext}$.
\end{theorem}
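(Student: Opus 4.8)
The plan is to prove the theorem by induction on~$n$. The base case $n=1$ is immediate, since $GL^+_1=(0,\infty)\subset\R$ is convex and so $d_{int}=d_{ext}$. For the inductive step I would assume $GL^+_m$ is Lipschitz normally embedded for every $m<n$ and deduce it for $n$, the essential geometric input being the conical structure of the stratification of $\R^{n^2}$ by rank, made explicit through the Schur complement.

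The first move is to reduce the global estimate to a local statement on a bounded region, using that $GL^+_n$ is an open cone in $\R^{n^2}$ (invariant under $X\mapsto\lambda X$, $\lambda>0$), so that both metrics scale linearly. Given $A,B\in GL^+_n$ with $\delta=d_{ext}(A,B)$ and $\|A\|\le\|B\|$: if $\delta\ge\|A\|/10$ then $\|B\|\le 11\delta$, and one joins $A$ to $B$ by a radial segment of length $\le\delta$ followed by a path of length $\le\|B\|\cdot\operatorname{diam}_{int}\bigl(GL^+_n\cap S^{n^2-1}\bigr)$ between two matrices of norm $\|B\|$ (rescale a shortest path on the unit ``link'' $L:=GL^+_n\cap S^{n^2-1}$); so this case is under control provided $\operatorname{diam}_{int}L<\infty$. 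If instead $\delta<\|A\|/10$, rescaling by $1/\|A\|$ reduces to $\|A\|=1$, $\|B\|\in[\tfrac9{10},\tfrac{11}{10}]$, $\delta<\tfrac1{10}$; and if the segment $[A,B]$ meets $\{\det\le0\}$, then by the intermediate value theorem there is $X_0\in\{\det=0\}$ with $\|A-X_0\|\le\delta$ and $\|X_0\|$ near~$1$, hence $X_0$ has rank $k$ with $1\le k\le n-1$, and both $A$ and $B$ lie in a small ball about $X_0$. So the whole theorem reduces to a local normal-embedding statement near nonzero points of $\{\det=0\}$, the finiteness of $\operatorname{diam}_{int}L$ (and of intrinsic diameters of bounded sub-regions) then following by compactness and chaining.

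For the local statement, near a rank-$k$ point $X_0$ I would exploit the conical stratification through the Schur complement. After a coordinate permutation (a Euclidean isometry) the top-left $k\times k$ block $P$ of $X$ is invertible near $X_0$, and $\Psi(X)=\bigl(P,Q,R,\ S-RP^{-1}Q\bigr)$ is a smooth diffeomorphism of a ball $B(X_0,\rho)$, hence biLipschitz there. Since $\det X=\det P\cdot\det(S-RP^{-1}Q)$ with $\det P$ of constant sign on $B(X_0,\rho)$, after composing with a reflection if needed $\Psi$ biLipschitz-identifies $GL^+_n\cap B(X_0,\rho)$ with an open piece of $\R^{M}\times GL^+_{m}$, where $m=n-k<n$, $M=n^2-m^2$, and $\Psi(X_0)$ has vanishing $GL^+_m$-coordinate (the Schur complement of a rank-$k$ matrix is $0$). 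By the inductive hypothesis $GL^+_m$, and hence $\R^M\times GL^+_m$ (concatenate a segment in the $\R^M$-factor with an inductive path in the $GL^+_m$-factor), is normally embedded; applied to $\Psi A,\Psi B$, which lie within $O(\delta)$ of $\Psi(X_0)$, this gives a connecting path of length $O(\delta)$ that — for $\delta$ small relative to $\rho$ — stays in $\Psi(B(X_0,\rho))$, and pulling it back by $\Psi^{-1}$ yields a path in $GL^+_n$ from $A$ to $B$ of length $\le C(n)\delta$. Combining this with the trivial case of interior points and feeding it into a localization principle for normal embedding — a bounded connected open subset of $\R^N$ that is locally normally embedded at every point of its closure is normally embedded, via a Lebesgue-number and chaining argument — applied to $GL^+_n\cap\{\tfrac12<\|X\|<2\}$, and then unwinding the cone reduction, gives the theorem; the points where the cutoff sphere $\{\|X\|=\tfrac12\}$ or $\{\|X\|=2\}$ meets $\{\det=0\}$ need one extra remark, using again that the strata are cones (so the radial direction is tangent to the stratum and the sphere is transverse to it), after which those points reduce to (a half-space)$\times GL^+_m$.

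The principal obstacle, I expect, is uniformity. One must choose the Schur charts with radii bounded below and biLipschitz constants bounded above as $X_0$ ranges over the compact set $\{\det=0\}\cap\{\|X\|\in[\tfrac12,2]\}$ — here lower semicontinuity of rank is used, so that a single invertible $k\times k$ block serves all nearby matrices — keep the short connecting paths from leaving their charts (which is exactly why one first reduces to the bounded annulus and to $\delta\ll\rho$), and assemble the local estimates into a single constant, along the way establishing finiteness of intrinsic diameters of the bounded pieces. The geometric ingredient itself — the local product/cone normal form of the rank stratification provided by the Schur complement — is elementary; the work lies in the quantitative bookkeeping that converts it into uniform global bounds.
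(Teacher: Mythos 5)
Your proposal is correct in outline and rests on the same geometric core as the paper --- the conical rank stratification and the local product structure of a transverse slice to the rank-$k$ stratum --- but it takes a genuinely different route through it. The paper first proves that the determinantal variety $X_n$ itself is normally embedded (by an induction on the slice models $X_{n-k}$, using the trivialization of \cite{MP}), and then performs surgery on the straight segment joining $A$ to $B$: the sub-segments lying in ${\it GL}^-(n,\R)$ are replaced by arcs in $X_n$, and those arcs are pushed out of the deeper strata step by step ($X_{n,k}\to X_{n,k+1}\to\cdots\to X_{n,n-1}\to {\it GL}^+(n,\R)$), which is the most delicate part of the argument (Steps 1--4 of Theorem~\ref{s35} and Section~\ref{s5}). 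You instead run the induction directly on ${\it GL}^+_m$: the Schur-complement chart $X\mapsto(P,Q,R,S-RP^{-1}Q)$ is an explicit, elementary realization of the trivialization $\phi_x$ of \eqref{e41}, and since $\det X=\det P\cdot\det(S-RP^{-1}Q)$ it identifies ${\it GL}^+_n$ near a rank-$k$ point bilipschitzly with an open piece of $\R^{M}\times {\it GL}^{\pm}_{n-k}$, so two nearby positive-determinant matrices are joined inside ${\it GL}^+_n$ in one stroke by a segment in the $\R^M$-factor plus an inductive path in the ${\it GL}^{\pm}_{n-k}$-factor. This buys you two things: you never need the normal embedding of $X_n$ as an intermediate result, and you entirely avoid the push-out of paths from the singular strata, replacing it by the trivial fact that a product of a Euclidean factor with a normally embedded set is normally embedded; your route is closer in spirit to \cite{BM} and \cite{HR}. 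The paper's route, in exchange, yields the normal embedding of the determinantal variety itself as a byproduct. The residual work in your version --- uniformity of chart radii and constants over the compact annulus, finiteness of the intrinsic diameter of the link via Lebesgue-number chaining, and the transversality remark at the cutoff spheres --- is genuine but standard, and you have correctly identified each of these points; none of them threatens the argument.
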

Thus the determinantal variety is \emph{normally embedded}; in other
words, the intrinsic and extrinsic metrics are \emph{bilipschitz
equivalent}.  We prove Theorem\;\ref{tm:main} first for~$n=2,3$ and
then for general~$n$. The proof uses in an essential way the conical
structure of the stratification of
the space~$\R^{n^2}$ of~$n\times n$ matrices
by rank.  Indeed, the extrinsic and intrinsic metrics on a set as
simple as~$\{(x,y) \colon \,x^2-y^3 > 0\}$ are clearly inequivalent,
due to the fact that the curve~$x^2 - y^3 = 0$ has a cusp.


Bilipschitz equivalence has been studied by a number of authors; see
e.g., \cite{Bi16}, \cite{BM}, \cite{KK}, \cite{HR}.  For a study of
the Lipschitz condition in an infinitesimal context see \cite{KKN},
\cite{NK}.

\section{Solution in  dimension 2}
\label{s2}

For~$2\times 2$ matrices with coordinates given by the entries
$(a_{ij})$, the condition~$a_{11}a_{22}-a_{12}a_{21}=0$ for the
determinantal variety in the space of matrices translates into
\[
(a_{11}+a_{22})^2-(a_{11}-a_{22})^2
-(a_{12}+a_{21})^2+(a_{12}-a_{21})^2=0
\]
or to simplify notation
\begin{equation}
\label{e21}
x^2+y^2=z^2+w^2.
\end{equation}

\begin{definition}
Let~$X_2=\{A\in \R^{2^2} \colon \det(A)=0\}$ denote the determinantal
variety.
\end{definition}

\begin{lemma}
\label{l21}
The intersection~$X_2\cap S^3$ with the unit sphere
$S^3\subseteq\R^{2^2}$ is a flat~$2$-torus, namely the Clifford
torus~$T\subseteq S^3$.
\end{lemma}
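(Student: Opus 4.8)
The plan is to track the linear change of variables already used above to rewrite $\det A = 0$ as~\eqref{e21}, and to observe that this change of variables is a \emph{similarity} of~$\R^4$. Explicitly, set $x = a_{11}+a_{22}$, $y = a_{12}-a_{21}$, $z = a_{11}-a_{22}$, $w = a_{12}+a_{21}$; then a one-line computation gives $x^2+y^2+z^2+w^2 = 2\bigl(a_{11}^2+a_{22}^2+a_{12}^2+a_{21}^2\bigr) = 2\|A\|^2$, so the linear map $A\mapsto(x,y,z,w)$ is $\sqrt2$ times an orthogonal transformation of~$\R^4$. In particular it is conformal, it carries the unit sphere $S^3\subseteq\R^{2^2}$ onto the sphere of radius~$\sqrt2$ in the $(x,y,z,w)$-coordinates, and it carries $X_2$ onto the cone~\eqref{e21}; moreover it transports isometrically (up to the overall factor~$\sqrt2$) all Riemannian data, so it suffices to identify the image of $X_2\cap S^3$.

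First I would intersect the two equations in the new coordinates. On the sphere $x^2+y^2+z^2+w^2 = 2$, equation~\eqref{e21} is equivalent to the pair of equations $x^2+y^2 = 1$ and $z^2+w^2 = 1$. This locus is the product of the unit circle in the $(x,y)$-plane with the unit circle in the $(z,w)$-plane, hence a $2$-torus; parametrizing it by $(\cos\theta,\sin\theta,\cos\phi,\sin\phi)$ exhibits the induced metric as $d\theta^2+d\phi^2$, which is flat, and identifies it as the Clifford torus inside the sphere of radius~$\sqrt2$.

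Finally I would divide all coordinates by~$\sqrt2$ to return to $S^3$: the image is the product $\{x^2+y^2 = \tfrac12\}\cap\{z^2+w^2 = \tfrac12\}$ of two circles of radius $1/\sqrt2$, with induced metric $\tfrac12\bigl(d\theta^2+d\phi^2\bigr)$, still flat, which is precisely the standard Clifford torus $T\subseteq S^3$. Since a similarity of the ambient space carries flat tori to flat tori and Clifford tori to Clifford tori, this proves the lemma.

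I do not expect any genuine obstacle here. The one point that must be handled carefully is the verification that the passage from $(a_{ij})$ to $(x,y,z,w)$ is conformal, i.e.\ a scalar multiple of an orthogonal map; it is exactly this fact that lets every metric notion — the unit sphere, flatness of the induced metric, the identification with the Clifford torus — be moved freely between the two coordinate systems. Everything else reduces to the elementary observation that, for the radial coordinates $r_1,r_2$ of two orthogonal $2$-planes in~$\R^4$, the intersection of $\{r_1 = r_2\}$ with a sphere $\{r_1^2+r_2^2 = \mathrm{const}\}$ is a flat product of two equal circles.
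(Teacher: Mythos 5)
Your proof is correct and follows the same route the paper takes implicitly: the paper's entire argument is the change of variables leading to~\eqref{e21}, after which the identification with the Clifford torus is simply asserted. You supply the one detail the paper leaves unstated — that the map $A\mapsto(x,y,z,w)$ is $\sqrt2$ times an orthogonal transformation, so that flatness and the Clifford property transfer — and the rest of your computation matches the intended argument.
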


Indeed, by~\eqref{e21} the complement~$S^3\setminus T$ is a union of
two solid tori, consisting respectively of matrices of positive and
negative determinant.  Note that~$X_2$ is a linear cone
over~$T\subseteq\R^{2^2}$.

\begin{lemma}
The metrics~$d_{int}$ and~$d_{ext}$ on~$T\subseteq\R^{2^2}$ are
bilipschitz equivalent.
\end{lemma}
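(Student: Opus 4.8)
The plan is to bypass general-position arguments and exploit the explicit flat structure on~$T$ furnished by Lemma~\ref{l21}. Being the intersection of~$X_2$ with~$S^3$, the torus~$T$ consists of those~$(x,y,z,w)$ with~$x^2+y^2=z^2+w^2=\tfrac12$, and I would introduce the parametrization
\[
\phi\colon \R^2/2\pi\Z^2 \longrightarrow T\subseteq\R^{2^2},\qquad
\phi(s,t)=\tfrac{1}{\sqrt2}\,(\cos s,\sin s,\cos t,\sin t),
\]
recording first that~$\phi$ is a smooth diffeomorphism. One direction of the bilipschitz estimate, namely~$d_{ext}\le d_{int}$, is automatic, since a Euclidean chord is no longer than any path joining its endpoints; so the content is the reverse inequality~$d_{int}\le C\,d_{ext}$.

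For this I would proceed in three steps. Step~1: compute the first fundamental form. Since~$\phi_s=\tfrac1{\sqrt2}(-\sin s,\cos s,0,0)$ and~$\phi_t=\tfrac1{\sqrt2}(0,0,-\sin t,\cos t)$ are orthogonal of length~$\tfrac1{\sqrt2}$, the metric induced on the parameter torus is the flat metric~$\tfrac12(ds^2+dt^2)$. Hence, writing~$\bar s,\bar t\in[-\pi,\pi]$ for the reduced representatives of the differences of the two angular coordinates, the intrinsic distance is exactly~$d_{int}=\tfrac{1}{\sqrt2}\sqrt{\bar s^2+\bar t^2}$, the minimizing geodesic running the short way around each circle factor. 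Step~2: compute the chordal distance. A direct expansion gives~$d_{ext}^2=2-\cos\bar s-\cos\bar t=(1-\cos\bar s)+(1-\cos\bar t)$; it depends only on~$\bar s,\bar t$ because~$\cos$ is even and~$2\pi$-periodic, so both distances are expressed through the same quantities. Step~3: compare them via the elementary inequality~$1-\cos\theta\ge\tfrac{2}{\pi^2}\theta^2$ for~$|\theta|\le\pi$ (equivalently Jordan's inequality~$\sin u\ge\tfrac2\pi u$ on~$[0,\tfrac\pi2]$ applied with~$u=\theta/2$). Summing over the two factors yields~$d_{ext}^2\ge\tfrac2{\pi^2}(\bar s^2+\bar t^2)=\tfrac{4}{\pi^2}\,d_{int}^2$, i.e.~$d_{int}\le\tfrac\pi2\,d_{ext}$. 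Combined with the trivial direction this gives~$d_{ext}\le d_{int}\le\tfrac\pi2\,d_{ext}$, so~$C=\tfrac\pi2$ works.

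I do not anticipate a serious obstacle: once the flat parametrization is in place the estimate reduces to a one-variable calculus inequality applied twice. The only point requiring care is the wrap-around — one must check that the minimizing geodesic uses the shorter arc in each~$S^1$-factor and that the chordal formula is naturally written in the same reduced variables~$\bar s,\bar t$ — both of which hold precisely because of the~$2\pi$-periodicity and evenness of the trigonometric functions. (Alternatively, one could invoke the general principle that every compact~$C^1$ submanifold of Euclidean space is normally embedded, via a Lebesgue-number argument reducing to a local graph estimate; but for the Clifford torus the explicit computation above is shorter and yields the sharp constant~$\pi/2$.)
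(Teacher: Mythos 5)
Your proof is correct, but it takes a genuinely different route from the paper's. The paper argues softly: one direction ($d_{ext}\le d_{int}$) is automatic, and for the other the authors note that by compactness of~$T$ the only danger is colliding pairs of points, where smoothness forces~$d_{int}/d_{ext}\to 1$; this is a two-line argument that applies verbatim to any compact~$C^1$ submanifold (the general principle you mention in your closing parenthesis is essentially what the paper invokes). You instead exploit the explicit flat parametrization of the Clifford torus, reduce both distances to the angular differences~$\bar s,\bar t\in[-\pi,\pi]$, and compare~$\tfrac12(\bar s^2+\bar t^2)$ with~$(1-\cos\bar s)+(1-\cos\bar t)$ via Jordan's inequality. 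Your computations check out: the induced metric is indeed~$\tfrac12(ds^2+dt^2)$, the chordal formula~$d_{ext}^2=2-\cos\bar s-\cos\bar t$ is right, and the constant~$\pi/2$ is attained at~$(\bar s,\bar t)=(\pi,0)$, so it is sharp. What your approach buys is an explicit, optimal Lipschitz constant and complete self-containedness (no appeal to a compactness argument whose uniformity one might otherwise want to spell out); what it costs is generality --- it leans on the special product structure of~$T$, whereas the paper's argument is the one that recurs for the higher strata in later sections. Either proof is acceptable here, since the lemma is only used qualitatively and the constant~$C(n)$ in the main theorem is not tracked.
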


\begin{proof}
By rescaling, we can assume that the furthest of the two points is at
unit distance from the origin.  By compactness, the only problem for a
pair of points~$p,q$ can arise when the points collide, i.e.,~$d(p,q)$
tends to~$0$.  By the smoothness of the Clifford torus, one
has~$\frac{d_{int}(p,q)}{d_{ext}(p,q)}\to 1$ as~$d(p,q)\to 0$.
\end{proof}

\begin{lemma}
\label{l22}
Bilipschitz equivalence holds for the intrinsic and the extrinstic
metrics on~$X_2$.
\end{lemma}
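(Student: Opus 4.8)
The plan is to exploit the observation recorded after Lemma~\ref{l21} that $X_2$ is the linear cone over the Clifford torus $T\subseteq S^3$: every point of $X_2$ can be written as $rp$ with $p\in T$ and $r\ge 0$, and $X_2$ is invariant under all scalings $v\mapsto tv$. The inequality $d_{ext}\le d_{int}$ is automatic, since $d_{ext}$ is the infimum of lengths of \emph{all} paths in $\R^{2^2}$ joining the two points, a wider class than the paths lying in $X_2$. So the content is the reverse bound $d_{int}\le C\,d_{ext}$ for a constant $C$ depending only on $T$.

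Fix two points $P=rp$ and $Q=sq$ of $X_2$, with $p,q\in T$ and $0\le r\le s$ (the case $r=0$, that is $P$ at the origin, being settled at once by a radial segment). To bound $d_{int}(P,Q)$ from above I would concatenate two explicit paths inside $X_2$: first a path from $rp$ to $rq$ obtained by rescaling by $r$ a near length-minimizing path in $T$ from $p$ to $q$ — it lies in $rT\subseteq X_2$ and has length $r\,d^{T}_{int}(p,q)$ — and then the radial segment from $rq$ to $sq$, which lies on the ray $\{tq:t\ge 0\}\subseteq X_2$ and has length $s-r$. Invoking the bilipschitz equivalence on $T$ (the previous lemma) with constant $C_T$ gives
\begin{equation*}
d_{int}(P,Q)\ \le\ r\,d^{T}_{int}(p,q)+(s-r)\ \le\ C_T\,r\,|p-q|+(s-r).
\end{equation*}

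For the lower bound on $d_{ext}$, a direct computation using $|p|=|q|=1$ yields $d_{ext}(P,Q)^2=|rp-sq|^2=(s-r)^2+rs\,|p-q|^2$. Since $r\le s$ we have $rs\ge r^2$, hence $d_{ext}(P,Q)^2\ge r^2|p-q|^2$ and also $d_{ext}(P,Q)^2\ge(s-r)^2$. Combining these with the elementary inequality $(a+b)^2\le 2a^2+2b^2$ applied to $a=C_T\,r|p-q|$ and $b=s-r$ gives
\begin{equation*}
d_{int}(P,Q)^2\ \le\ 2C_T^2\,r^2|p-q|^2+2(s-r)^2\ \le\ 2(C_T^2+1)\,d_{ext}(P,Q)^2,
\end{equation*}
so $d_{int}\le\sqrt{2(C_T^2+1)}\;d_{ext}$ on $X_2$, as required.

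I do not expect a serious obstacle; the only point needing care is the decomposition of a competitor path into a ``spherical'' piece on a scaled copy $rT$ of the base and a ``radial'' piece, together with the observation that the squared Euclidean distance $(s-r)^2+rs|p-q|^2$ dominates both pieces precisely because $rs\ge\min(r,s)^2$. This cone-over-a-bilipschitz-base mechanism is exactly what will be iterated, stratum by stratum, when $n$ is general.
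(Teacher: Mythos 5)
Your proposal is correct and follows essentially the same route as the paper: exploit the linear cone structure over the Clifford torus by combining a radial slide between levels with a (near-)shortest path at the level of the nearer point, and invoke the bilipschitz property of $T$ from the preceding lemma. The paper leaves the final comparison as ``clearly bilipschitz with the extrinsic distance,'' whereas you supply the explicit identity $|rp-sq|^2=(s-r)^2+rs\,|p-q|^2$ and the estimate $rs\ge r^2$, which correctly justifies that step.
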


\begin{proof}
Let~$p,q\in X_2$.  If the apex~$O\in X_2$ is one of the points~$p,q$
then the intrinsic and the Euclidean distances between them coincide
by linearity of the cone.

Thus we can assume that~$p\not=O$ and~$q\not= O$.  To connect a pair
of points~$p,q,$ at different levels in the cone by a path lying in
the cone, assume without loss of generality that~$p$ is further
than~$q$ from the apex of the cone.  We slide~$p$ along the ray toward
the apex until it reaches the level of~$q$, and then connect it to~$q$
by a shortest path contained in that level.  The length of the
combined path is clearly bilipschitz with the extrinsic distance in
the ambient~$\R^{2^2}$.
\end{proof}

\begin{proposition}
\label{p24}
The intrinsic and the ambient metrics on the manifold of~$2\times 2$
matrices of positive determinant are bilipschitz equivalent.
\end{proposition}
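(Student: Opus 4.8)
The plan is to use that the manifold $G := {\it GL}^+(2,\R)$ of positive-determinant matrices is an open linear cone in $\R^{2^2}$, since $\det(cA)=c^2\det A>0$ for $c>0$, and to reduce the statement to a quasiconvexity property of its link on the unit sphere. Put $U := G\cap S^3$. By Lemma~\ref{l21} and the remark following it, $U$ is one of the two open solid tori into which the Clifford torus $T$ divides $S^3$; in particular $U$ is connected, its closure $\bar U = \{A\in S^3:\det A\ge 0\}$ is compact, and its frontier in $S^3$ is the smooth surface $T$.

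The substantive step is to show that $U$ is \emph{quasiconvex} in $S^3$, i.e.\ that there is a constant $C'=C'(2)$ such that any two points of $U$ are joined by a path lying in $U$ of length at most $C'$ times their Euclidean distance. On the compact set $\bar U$ the ratio $d_{int}/d_{ext}$ varies continuously and is finite off the diagonal, so the only way it could fail to be bounded is for a pair of points that collide. If two such points lie at distance from $T$ that is at least their mutual distance, then the minimizing $S^3$-geodesic between them cannot meet $T$, hence lies in $U$, and the ratio is essentially $1$. The remaining case — colliding points both near $T$ — is where smoothness of $T$ (Lemma~\ref{l21}) is used: in a tubular-neighborhood chart along $T$ the region $U$ looks, up to a bilipschitz change of coordinates, like the product of a patch of $T$ with a half-line, which is locally quasiconvex, and compactness of $T$ makes the local bound uniform. (Equivalently, one may invoke the standard fact that a compact domain with smooth boundary is quasiconvex.) I expect this to be the only step requiring genuine geometry rather than bookkeeping.

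It then remains to cone up: given $p,q\in G$ — both nonzero, as $0\in X_2$ — assume $\|p\|\ge\|q\|$ and set $\bar p=p/\|p\|$, $\bar q=q/\|q\|\in U$. Choose a path $\sigma\subset U$ from $\bar p$ to $\bar q$ of length $\le C'|\bar p-\bar q|$, and concatenate the rescaled path $\|p\|\cdot\sigma$ (from $p$ to $\|p\|\bar q$, contained in $\|p\|\cdot U\subset G$) with the radial segment from $\|p\|\bar q$ to $q$ (contained in $G$, a cone). This is a path in $G$ from $p$ to $q$ of length $\le C'\|p\|\,|\bar p-\bar q| + (\|p\|-\|q\|)$. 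Since $\|p\|-\|q\|\le|p-q|$ and $\|p\|\,|\bar p-\bar q| = |\,\|p\|\bar p - \|p\|\bar q\,| \le |p-q| + (\|p\|-\|q\|)\le 2|p-q|$, the length is at most $(2C'+1)\,d_{ext}(p,q)$. Thus $d_{int}\le (2C'+1)\,d_{ext}$ on $G$, and since $d_{ext}\le d_{int}$ always, the two metrics are bilipschitz equivalent, with $C(2)=2C'(2)+1$.

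This is exactly the template — the link of a stratum is normally embedded in its sphere, and one then cones up — that one expects to recur when handling the stratification by rank for general $n$; the delicate ingredient each time is the quasiconvexity of the link near the next stratum down, which for $n=2$ is just the single smooth torus $T$.
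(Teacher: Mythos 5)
Your proof is correct, but it takes a genuinely different route from the paper's. The paper does not prove quasiconvexity of the link directly: it starts from the straight segment joining $p$ to $q$, cuts it where it crosses the determinantal variety $X_2$, replaces each sub-segment whose interior lies in ${\it GL}^-(2,\R)$ by a bilipschitz-controlled arc inside $X_2$ (Lemma~\ref{l22}), and then pushes those arcs off $X_2$ into ${\it GL}^+(2,\R)$ --- along the normal direction on the smooth part of $X_2$, and via a connectivity argument on a small sphere $S(O,\epsilon)$ when an arc passes through the apex. Your route --- quasiconvexity of the open solid torus $U={\it GL}^+(2,\R)\cap S^3$, whose frontier is the smooth Clifford torus $T$, followed by the coning-up estimate $d_{int}\le(2C'+1)\,d_{ext}$ --- is shorter and more self-contained for $n=2$: it yields an explicit constant and bypasses both the normal embedding of $X_2$ and the push-out procedure entirely. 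What the paper's detour buys is reusability: for $n\ge3$ the frontier of ${\it GL}^+(n,\R)$ is singular along the strata of rank $\le n-2$, so ``a compact domain with smooth boundary is quasiconvex'' is no longer available for the link, and the replace-by-an-arc-in-$X_n$-then-push-out scheme is exactly the machinery that Theorem~\ref{s35} and Section~\ref{s5} run stratum by stratum. Your closing sentence anticipates this, but note that the delicate ingredient in higher rank is the stratified conical structure of the boundary, not merely the smoothness of a single link. The one step of yours that should be written out rather than invoked is the ``standard fact'' itself (quasiconvexity of a compact domain with smooth boundary); your tubular-neighborhood-plus-compactness sketch is the right proof and is at the same level of detail as the paper's own compactness arguments, so this is a matter of exposition rather than a gap.
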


\begin{proof}
The closure of~${\it GL}^+(2,\R)$ is a linear cone over the solid
torus.  Meanwhile,~${\it GL}^+(2,\R)$ itself is the cone without the
apex over the \emph{interior} of the solid torus.  We consider a
straight line path in~$\R^{2^2}$ connecting a pair of points in~${\it
GL}^+(2,\R)$.  The length of this path is the extrinsic distance by
definition.

The path does not necessarily lie entirely inside~${\it GL}^+(2,\R)$.
As the path is straight and the determinantal variety~$X_2$ is
algebraic, the path splits into a finite number of segments satisfying
the following:
\begin{enumerate}
\item
the interior of each segment lies fully either
inside~$\emph{GL}^+(2,\R)$ or inside the component~${\it GL}^-(2,\R)$;
\item
the endpoints are in~$X_2$.
\end{enumerate}
Applying Lemma~\ref{l22}, we replace every segment that lies
in the component~$\emph{GL}^-(2,\R)$ by an arc that lies in~$X_2$.

Finally, we push out the arc in~$X_2$ into~$GL^+(2,\R)$, i.e., replace
it by a nearby arc ``just inside''~$\emph{GL}^{+}(2,\R)$.  We will
explain the push-out procedure in detail since a similar argument will
be used in the general case.

The vertex~$O\in X_2$ is the unique singular point of the cone, i.e.,
the complement~$X_2\setminus \{ O\}$ is a smooth manifold.  Hence a
path in~$X_2$ disjoint from~$O$ can be pushed out infinitesimally
into~$\emph{GL}^+(2,\R)$ by following the normal direction, without
significantly affecting its length.

Suppose a path joining~$P,Q\in X_2$ passes through~$O$.  Using the
linear structure of the cone, the path can be replaced by the shorter
path given by the union of the straight line segments~$PO\cup
OQ\subseteq X_2$.  Next,~$P$ and~$Q$ can be replaced by nearby
points~$P',Q'\in \emph{GL}^{+}(2,\R)$.  We now form a path~$P'O\cup
OQ'$ lying entirely within~$\emph{GL}^{+}(2,\R)$ except for the single
point~$O$.

Let~$p\in O\!P'$ be the point of intersection of the segment with a
sphere~$S(O,\epsilon)$ of small radius~$\epsilon>0$ centered at~$O$,
and similarly for point~$q\in OQ'$.  The
intersection~$S(O,\epsilon)\cap\emph{GL}^{+}(2,\R)$ is an open solid
torus and therefore connected.  Hence there is a short path
\[
\gamma\subseteq S(O,\epsilon)\cap\emph{GL}^{+}(2,\R)
\]
joining~$p$ to~$q$.  The resulting path~$P'p\cup\gamma\cup qQ'$ is
only slightly longer than the original path joining~$P$ and~$Q$.  This
completes the proof of the bilipschitz property
for~$\emph{GL}^{+}(2,\R)$.
\end{proof}

\section{Singularities in dimension 3}
\label{s3}

For~$3\times 3$ matrices, the singular locus of the determinantal
variety~$X_3=\{A\in \R^{3^2}\colon \det A=0\}$ consists of matrices of
rank~$\leq 1$.  The variety~$X_3$ can be stratified as follows:
\[
X_3=X_{3,0} \cup X_{3,1}\cup X_{3,2}.
\]
Here the stratum~$X_{3,i}$ consists of matrices of rank~$i$.  The
stratum~$X_{3,1}$ lies in the closure of~$X_{3,2}$, while~$X_{3,0}$
lies in the closure of~$X_{3,1}$.  Each~$X_{3,i}$ is smooth.
Here~$X_{3,2}$ is of codimension~$1$ in the space of matrices,
while~$X_{3,1}$ is of codimension~$4$ in the space of matrices,
and~$X_{3,0}$ is a single point.

The closure~$\overline{X}_{3,2}$ is a cone on a smooth manifold away
from~$X_{3,1}$.  Thus the only potential obstruction to bilipschitz
equivalence is the singularity of~$\overline{X}_{3,2}$
along~$\overline{X}_{3,1},$ which we now analyze.
\begin{lemma}
\label{l31}
The~$5$-dimensional closure~$\overline{X}_{3,1}$ is a linear cone over
the smooth compact~$4$-manifold~$(S^2\times S^2)/\{\pm1\}$.
\end{lemma}
\begin{proof}
Here~$S^2 \times S^2 / \{\pm1\}$ parametrizes the intersection of
$X_{3,1}$ with the unit~$8$-sphere in the space of matrices.  The
manifold~$S^2\times S^2$ can be parametrized by pairs of unit column
vectors~$v,w\in S^2\subseteq\R^3$ producing a rank~$1$ matrix
\[
v\;{}^t\!w\in X_{3,1}.
\]
The element~$-1$ acts simultaneously on both factors of~$S^2\times
S^2$ by the antipodal map.
\end{proof}

Fix an element~$x \in X_{3,1}$.  We would like to understand the
bilipschitz property for~$X_3$ in a neighborhood of~$x$.  Exploiting
the action of~$\emph{GL}(3,\R)\times \emph{GL}(3,\R)$ on~$X_{3,1}$ by
right and left multiplication, we may assume that
\[
x=1\oplus \begin{pmatrix}0&0\\0&0\end{pmatrix}.
\]

\begin{definition}
\label{d32}
Let
\begin{equation*}
L_x = \{1\oplus A \colon A \in \R^{2^2}\} \subseteq \R^{3^2}
\end{equation*}
be a transverse slice to~$X_{3,1}\subseteq\R^{3^2}$ at~$x$, so that
\begin{equation}
\label{e31}
L_x \cap \overline X_{3,2} = \{1 \oplus A \in L_x \colon \det(A) =
0\}.
\end{equation}
\end{definition}

Note that of course transversality is weaker than orthogonality.  The
ambient Euclidean metric on~$\R^9$ plays no role here.

\begin{lemma}\label{l32}
A transverse slice for~$X_{3,1}\subseteq \overline{X}_{3,2}$ in local
coordinates is a linear cone over a Clifford torus.
\end{lemma}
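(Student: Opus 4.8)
The plan is to reduce, via the bi-action of $\emph{GL}(3,\R)\times \emph{GL}(3,\R)$ already used above, to the standard representative $x = 1\oplus 0$ and the explicit slice $L_x$ of Definition~\ref{d32}, and then to identify $L_x\cap\overline{X}_{3,2}$ with the $2\times 2$ determinantal variety $X_2$ of Section~\ref{s2}, whose conical structure over the Clifford torus is recorded in Lemma~\ref{l21} and the remark following it. The point of the lemma is precisely that the local model of the singularity of $\overline X_{3,2}$ along $\overline X_{3,1}$, transverse to the singular stratum, is the cone of Lemma~\ref{l21}; since bilipschitz equivalence is insensitive to linear changes of coordinates, working ``in local coordinates'' (i.e.\ up to the linear map that normalizes $x$) loses nothing.

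First I would check that $L_x$ is genuinely transverse to $X_{3,1}$ at $x$. By Lemma~\ref{l31} the tangent space to $X_{3,1}$ at a rank-$1$ matrix $v\,{}^tw$ consists of matrices $v\,{}^tb + a\,{}^tw$; at $x=e_1\,{}^te_1$ this is the span of the first row and first column of a $3\times 3$ matrix, a $5$-dimensional subspace complementary to the $4$-dimensional space $\{0\oplus A\colon A\in\R^{2^2}\}$ of slice directions. Hence $L_x$ meets $X_{3,1}$ transversally in the single point $x$. Next, for a matrix $1\oplus A\in L_x$ the block-triangular form gives $\det(1\oplus A)=\det A$, so
\[
L_x\cap\overline{X}_{3,2}=L_x\cap\{A'\in\R^{3^2}\colon\det A'=0\}=\{1\oplus A\in L_x\colon\det(A)=0\},
\]
which is exactly~\eqref{e31}. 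Using the affine coordinates $A\in\R^{2^2}$ on $L_x$, with the cone point $x$ corresponding to $A=0$, this identifies $L_x\cap\overline X_{3,2}$ with $X_2=\{A\in\R^{2^2}\colon\det A=0\}$.

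Finally, by Lemma~\ref{l21} the intersection $X_2\cap S^3$ is the Clifford torus $T\subseteq\R^{2^2}$, and $X_2$ is the linear cone over $T$; transporting this description back through the identification above exhibits the transverse slice, in these local coordinates, as a linear cone over a Clifford torus. I expect the only real obstacle to be the bookkeeping around the normalization: one must note that every $x\in X_{3,1}$ is $v\,{}^tw$ with $v,w\neq 0$ and hence lies in the $\emph{GL}(3,\R)\times\emph{GL}(3,\R)$-orbit of $e_1\,{}^te_1$, and that the resulting local model is only claimed up to the linear coordinate change this entails — which is precisely the sense needed for the subsequent bilipschitz argument, since linear isomorphisms are bilipschitz. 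No genuine computation beyond the block-determinant identity is required.
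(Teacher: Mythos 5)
Your proposal is correct and follows the same route as the paper: identify the slice $L_x\cap\overline{X}_{3,2}$ with $\{A\in\R^{2^2}\colon\det A=0\}$ via the block-determinant identity and invoke Lemma~\ref{l21}. The paper's own proof is just these two sentences; your additional verifications (transversality of $L_x$ to $X_{3,1}$ via the tangent space $v\,{}^tb+a\,{}^tw$, and the reduction to $x=1\oplus 0$ by the bi-action) are correct and merely make explicit what the paper leaves implicit.
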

\begin{proof}
The defining equation~\eqref{e31} of the slice is~$\det(A)=0$.  By
Lemma~\ref{l21} this is a cone over a torus.
\end{proof}
Now let~$U_T \subseteq \R^5$ be the unit ball. Choose open
sets~$U_N\subseteq L_x$ and~$U \subseteq \R^{3^2}$, each
containing~$x$, and a bilipschitz diffeomorphism
\begin{equation}
\label{e32}
\phi \colon U \to U_T \times U_N
\end{equation}
such that~$\phi(U\cap \overline X_{3,1}) = U_T \times\{x\}$
and~$\phi(U \cap \overline X_{3,2}) = U_T \times (U_N \cap
\overline{X}_{3,2}).$ This can be done using the~$GL(3,\R) \times
GL(3,\R)$ action, as explained in~\cite[p.\;138]{MP}.  Hence
Lemmas~\ref{l32} and~\ref{l22} imply that the intrinsic and extrinsic
metrics on~$U\cap \overline{X}_{3,2}$ are bilipschitz equivalent.

\begin{corollary}
\label{c33}
The intrinsic and extrinsic metrics on~$\overline X_{3,2} = X_3$ are
bilipschitz equivalent.
\end{corollary}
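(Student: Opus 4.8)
The plan is to reduce the global bilipschitz statement on $X_3 = \overline{X}_{3,2}$ to the local model already established. We have decomposed $X_3$ into three strata: the smooth open stratum $X_{3,2}$, the singular locus $\overline{X}_{3,1}$, and the apex. Away from $\overline{X}_{3,1}$ the variety $X_3$ is a smooth cone over a compact manifold (with the apex removed), and exactly as in the proof of Lemma~\ref{l22} the cone structure gives bilipschitz equivalence there. By the discussion following~\eqref{e32}, every point $x\in X_{3,1}$ has a neighborhood $U$ with a bilipschitz diffeomorphism $\phi$ as in~\eqref{e32} under which $U\cap X_3$ is identified with $U_T\times(U_N\cap\overline{X}_{3,2})$, the product of a ball with a linear cone over a Clifford torus; Lemmas~\ref{l32} and~\ref{l22} then give bilipschitz equivalence of the two metrics on $U\cap X_3$. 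The apex $X_{3,0}$ is handled by the conical structure of Lemma~\ref{l31}: near the apex $X_3$ is a linear cone over a compact set, and the apex-to-point comparison is exact by linearity.

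The first thing I would do is spell out the patching: cover $X_3\cap S^8$, which is compact, by finitely many open sets on each of which $d_{int}$ and $d_{ext}$ are comparable with a uniform constant --- the local charts around points of $X_{3,1}$, the smooth cone charts away from $X_{3,1}$, and a conical neighborhood of the apex. Since $X_3$ is a linear cone over the compact set $X_3\cap S^8$, it suffices to prove the inequality $d_{int}(p,q)\le C\,d_{ext}(p,q)$ for $p,q$ with, say, $|p|,|q|\le 1$, and by the standard slide-to-the-same-level trick of Lemma~\ref{l22} one reduces to pairs at the same distance $r$ from the apex, i.e. pairs on the sphere $S(O,r)$. Rescaling to $r=1$, the claim becomes a statement about the compact set $X_3\cap S^8$ with its induced intrinsic metric, and this follows from the finite cover by charts with uniform bilipschitz constants together with a Lebesgue-number argument: a short extrinsic segment either stays in one chart or is broken into uniformly boundedly many pieces each of which does.

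The one genuine subtlety --- and the step I expect to be the main obstacle --- is the passage from the \emph{local} bilipschitz equivalence on each $U\cap X_3$ to a \emph{global} one with a single constant. Two issues must be controlled: first, that the number of chart-crossings along an extrinsic geodesic segment between two points of $X_3\cap S^8$ is bounded independently of the points (handled by compactness and a Lebesgue number for the cover, using that $X_3$ is semialgebraic so a straight segment meets each stratum in finitely many pieces); second, that when we push a path lying in $X_3$ slightly into $GL^+(3,\R)$ --- or concatenate local arcs across chart boundaries --- the total length does not blow up near $\overline{X}_{3,1}$ or near the apex. The apex case is dealt with exactly as in Proposition~\ref{p24}: near $O$ one uses that $S(O,\epsilon)\cap GL^+(3,\R)$ is connected (indeed $GL^+(3,\R)$ is connected, so its intersection with a small sphere about a boundary point is connected), so a short arc on that sphere joins the two incoming segments with only a small length cost. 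Assembling these estimates yields a uniform constant $C=C(3)$ with $d_{int}<C\,d_{ext}$ on $X_3$, which is the assertion of Corollary~\ref{c33}.
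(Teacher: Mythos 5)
Your proposal is correct and follows essentially the same route as the paper: the paper's own proof is the single sentence ``This follows from Lemma~\ref{l31} by a compactness argument,'' resting on the local product structure~\eqref{e32} near~$X_{3,1}$ and the cone structure elsewhere, exactly the ingredients you use. You have simply spelled out the compactness/patching step (reduction to the unit sphere, finite cover, Lebesgue number) that the paper leaves implicit.
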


This follows from Lemma~\ref{l31} by a compactness argument.

\begin{theorem}
\label{s35}
The intrinsic and extrinsic metrics on~\it{GL}$^+(3,\R)$ are
bilipschitz equivalent.
\end{theorem}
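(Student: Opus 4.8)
The plan is to mimic the argument used for $GL^+(2,\R)$ in Proposition \ref{p24}, now leaning on Corollary \ref{c33} in place of Lemma \ref{l22}. Given $p, q \in GL^+(3,\R)$, consider the straight-line segment between them in $\R^{3^2}$; its length is $d_{ext}(p,q)$. Since $X_3$ is algebraic, the segment meets $X_3$ in finitely many points, so it decomposes into finitely many subsegments whose interiors lie entirely in $GL^+(3,\R)$ or entirely in the complement $\{A \colon \det A < 0\}$, with endpoints on $X_3$. For each subsegment lying in the $\det < 0$ component, replace it by an arc in $X_3$ joining the same endpoints, of comparable length; this is exactly Corollary \ref{c33}, which gives bilipschitz equivalence of the two metrics on $X_3$ and hence lets us reroute within $X_3$ at bounded cost. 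At this stage we have a path from $p$ to $q$ lying in $GL^+(3,\R) \cup X_3$, with length $\le C\, d_{ext}(p,q)$.

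The remaining task is the push-out: replace the portions of the path lying in $X_3$ by nearby arcs inside $GL^+(3,\R)$, without increasing the length by more than a bounded factor. Away from the singular locus $\overline{X}_{3,1}$, the variety $\overline{X}_{3,2}$ is a smooth hypersurface (indeed a cone over a smooth manifold, away from $X_{3,1}$), so locally we push off along a normal direction into $GL^+(3,\R)$, which changes length negligibly. Near a point $x \in X_{3,1}$ we use the product structure \eqref{e32}, the bilipschitz chart $\phi \colon U \to U_T \times U_N$ with $\phi(U \cap \overline X_{3,1}) = U_T \times \{x\}$ and $\phi(U \cap \overline X_{3,2}) = U_T \times (U_N \cap \overline X_{3,2})$: in these coordinates the local model for $\overline X_{3,2}$ is (a product with) a linear cone over a Clifford torus, and the cone-with-apex argument from Proposition \ref{p24} applies in the $U_N$ factor. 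Concretely, if a sub-arc of the path passes through the cone apex $x$ (the worst case), we replace it by the union of two straight segments into the apex, perturb their far endpoints slightly into $GL^+(3,\R)$, and join the two perturbed endpoints by a short arc on a small sphere $S(x,\epsilon)$ inside $GL^+(3,\R)$ — this is possible because $S(x,\epsilon) \cap GL^+(3,\R)$, being (via $\phi$) a product of a ball with an open solid torus, is connected. Finitely many such charts cover a neighborhood of the compact cross-section $X_{3,1} \cap S^8$, and away from these charts the smooth push-out suffices; scaling by the cone structure of $X_3$ extends the bound from a neighborhood of the unit sphere to all of $\R^{3^2}$.

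The main obstacle is the push-out near the stratum $\overline{X}_{3,1}$, and specifically controlling the length cost uniformly: one must check that the connectedness of $S(x,\epsilon) \cap GL^+(3,\R)$ can be upgraded to a \emph{quantitative} statement — that two points $p, q$ on that sphere lying just outside $\overline{X}_{3,2}$ can be joined within $S(x,\epsilon) \cap GL^+(3,\R)$ by an arc of length $O(\epsilon)$, with the implied constant independent of $x$ (after using the $GL(3,\R) \times GL(3,\R)$ action to normalize) and of $\epsilon$. This reduces, via the chart $\phi$ and Lemma \ref{l32}, to the corresponding estimate for the cone over the Clifford torus, which is the content of the two-dimensional analysis in Section \ref{s2}; the product factor $U_T$ contributes nothing essential. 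A secondary point requiring care is that finitely many such local models cover the cross-section by compactness, so the bilipschitz constants can be taken uniform, and the finitely many reroutings along a single straight segment compose without degrading the global constant.
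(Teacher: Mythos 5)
Your first half coincides with the paper's: decompose the straight segment at its intersections with $X_3$, reroute the $\det<0$ pieces through $X_3$ via Corollary~\ref{c33}, and then push the resulting arcs off $X_3$ into $GL^+(3,\R)$, the smooth part $X_{3,2}$ being handled by a normal push-off and the apex $O$ by the argument of Proposition~\ref{p24}. The difference, and the gap, is in how you treat the stratum $\overline{X}_{3,1}$. In the chart \eqref{e32} the singular set is $U_T\times\{x\}$, a $5$-dimensional ball of fiberwise apexes, not a single point. Your ``worst case'' --- an arc passing through ``the cone apex $x$,'' replaced by two straight segments to $x$ and a short arc on $S(x,\epsilon)$ --- is the picture for a cone with a point apex, and it does not control an arc that enters $\overline{X}_{3,1}$ at one point, travels (or repeatedly dips in and out) within the stratum, and exits at a distant point: the replacement's length is then governed by the distances of the endpoints to the chosen center $x$ rather than by the length of the original arc, so the bilipschitz bound is lost; and across overlapping charts the ``constant direction'' of the push-off changes, so the chart-by-chart reroutings must be patched inside $X_{3,2}$, which you do not address.

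The paper resolves exactly this point with a global two-step maneuver that your outline is missing. First, letting $a$ and $b$ be the first and last parameters at which the arc meets $X_{3,1}$, it replaces all of $\gamma([a,b])$ by a path lying entirely in $X_{3,1}$; this uses that $X_{3,1}$ is itself normally embedded in $X_3$ (Lemma~\ref{l31}: $\overline{X}_{3,1}$ is a linear cone over a compact smooth manifold, plus the chart argument of Section~\ref{s4b}), a fact your proposal never invokes. Second, it chooses a single trivialisation of $X_3$ over the whole path $\gamma([a,b])$ and pushes the path into $X_{3,2}$ by adding the constant section $\epsilon I_{0,1}$ of \eqref{s32}, patching with $\gamma(a-\delta)$ and $\gamma(b+\delta)$ by short arcs since the fiber is a cone over a connected space. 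Your local picture (Clifford-torus cone in the $U_N$ factor, $U_T$ inert) is the right local model, but to close the argument you need either this globalized version or a careful statement that each local rerouting preserves the $U_T$-component and modifies only the $U_N$-component, together with the normal embedding of $X_{3,1}$ to control travel within the stratum.
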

\begin{proof}
The extrinstic distance between a pair of matrices of positive
determinant is the length of the straight line path in~$\R^{3^2}$
joining them.  We partition the path into finitely many segments,
where the interior of each segment lies entirely in a connected
component of~${\it GL}(3,\R)$ while the endpoints are
in~$X_3\subseteq\R^{3^2}$.  Then we apply Corollary~\ref{c33} to
replace each segment belonging to the component~$\emph{GL}^-(3,\R)$ by
an arc in~$X_3$.  If the arc in~$X_3$ lies in the smooth part
$X_{3,2}\subseteq X_3$ then it can be pushed out into
$\emph{GL}^+(3,\R)$ by a small deformation in the normal direction, as
in Section~\ref{s2}.

Unlike the case of Proposition~\ref{p24}, the determinantal variety is
not a cone on a manifold, so that an additional argument is required
to push the arc out of~$X_3$ and into~$GL_3^+$ while retaining
bilipschitz control.

If an arc in~$X_3$ joining points~$P,Q$ passes through the apex~$O\in
X_3$ then it can be replaced by the union~$PO\cup OQ$ and pushed out
into~$\emph{GL}^+(3,\R)$ as in the proof of Proposition~\ref{p24}.

Otherwise we exploit the local product structure
on~$X_3\setminus\{O\}$ as in~\eqref{e32}.  A path in~$X=X_3$ that dips
into the singular locus~$X_{3,1}\subseteq X$ can be handled as
follows.  Given a path~$\gamma \colon [0,1]\to X$, let~$a\in [0,1]$ be
the least parameter value~$t$ such that~$\gamma(t)$ is contained in
the singular locus~$X_{3,1}\subseteq X$, and~$b\in [0,1]$ the greatest
such value.

Step 1.  Consider the restriction of the path~$\gamma$
to~$[a,b]\subseteq[0,1]$.  We replace it by a path that lies entirely
in the singular locus~$X_{3,1}$.  Section~\ref{s4b} proves
that~$X_{3,1}$ is embedded in~$X$ in a bilipschitz fashion.  Hence
this replacement can be performed in a bilipschitz-controlled way.

Step 2.  Once the path~$\gamma([a,b])$ is in the singular locus, we
exploit a local trivialisation to push it in a constant direction as
follows.  Choose~$\delta>0$ sufficiently small to be specified later.
Then~$\gamma(a-\delta)$ and~$\gamma(b+\delta)$ are in the smooth part
$X_{3,2}\subseteq X$.  The path can easily be shortened to a smooth
one, still denoted~$\gamma$.  Over a sufficiently small neightborhood
of the smooth path, we can choose a smooth (and in particular
bilipschitz) trivialisation of~$X$ over the path~$\gamma([a,b])$.  Let
\begin{equation}
\label{s32}
I_{0,1}(t)= I_{0,1} =\begin{pmatrix} 1& 0 \\ 0& 0
\end{pmatrix}
\end{equation}
be a constant section of the bundle over the path.  Relative to the
trivialisation of the bundle we can form a new path 
\[
\bar\gamma_{\epsilon}(t)=\gamma(t)+\epsilon I_{0,1}(t)
\]
which pushes~$\gamma([a,b])$ in the constant direction \eqref{s32} for
each~$t$.  By construction, the new path is contained in the
nonsingular part~$X_{3,2}$ of the determinantal variety,
where~$\epsilon>0$ is chosen small enough so that the length of the
path stays close to the original length of the path in~$X_{3,1}$.

Step 3.  It remains to check that the path can be patched up with the
value of the path~$\gamma$ at the parameter values~$a-\delta$
and~$b+\delta$.  Since the determinantal variety in the fiber is a
cone over a connected space, the two values can be connected by an
arbitrarily short path, provided~$\epsilon$ and~$\delta$ are chosen
small enough.

Step 4.  Once the path lies in the nonsingular part~$X_{3,2}\subseteq
X$ of the determinantal variety, it can be pushed out into
$\emph{GL}^+(3,\R)$ by following the normal direction as in the case
$n=2$.
\end{proof}

The higher dimensional case is treated inductively in
Section~\ref{s5}.

\section{The general determinantal variety}
\label{s4b}

Let~$X_{n,k}\subseteq \R^{n^2}$ be the stratum of rank~$k$ matrices.
Thus~$X_n$ is the closure of~$X_{n,n-1}$ and each~$X_{n,k}$ is a
smooth connected manifold of dimension~$n^2 - (n-k)^2.$ For each~$x
\in X_{n,k}$ choose an open set~$U_x \subseteq \R^{n^2}$
containing~$x$, a metric ball~$U^T_{x} \subseteq \R^{n^2-(n-k)^2},$ an
open set~$U^N_x\subseteq \R^{(n-k)^2}$ containing~$0,$ and a
bilipschitz diffeomorphism~$\phi=\phi_x \colon U_x \to U^T_{x} \times
U^N_x,$ such that
\begin{equation}
\label{e41}
\phi(U_x \cap X_{n,k}) = U^T_{x} \times \{0\}, \quad \phi(U_x \cap
X_n) = U^T_{x} \times (U^N_x \cap X_{n-k}).
\end{equation}
See \cite[p.\;138]{MP} for the construction of the
diffeomorphism~$\phi_x$.  To ensure the bilipschitz property, it
suffices to shrink slightly~$U_x$,~$U^N_x$ and~$U^T_{x}$.

Assume by induction that we have proved the bilipschitz equivalence of
the intrinsic and extrinsic metrics on~$X_\ell$ for~$\ell < n$.
Generalizing Lemma~\ref{l31} to dimension~$n,$ we see~$\overline
X_{n,1}$ is a linear cone over a compact smooth submanifold. So, the
intrinsic and extrinsic metrics on~$\overline X_{n,1}$ are bilipschitz
equivalent. For each~$x \in X_{n,1},$ the bilipschitz diffeomorphism
$\phi_x$ and the induction hypothesis for~$X_{n-1}$ show the intrinsic
and extrinsic metrics are bilipschitz equivalent on~$U_x \cap X_n.$
Then, a compactness argument shows the bilipschitz property holds for
a sufficiently small open neighborhood~$U_\epsilon\! \left(\,\overline
X_{n,1}\right)$ of~$\overline X_{n,1}$ in~$X_n$.  More
precisely,~$U_\epsilon$ is defined to be a cone over an
$\epsilon$-neighborhood in the unit sphere.

Next we consider the stratum~$X_{n,2}$. The complement~$X_{n,2}
\setminus\, U_\epsilon\! \left(\, \overline X_{n,1} \right)$ is a cone
on a compact smooth manifold with boundary.  For each
point~$x\in{}X_{n,2}\setminus\,U_\epsilon \!\left(\, \overline
X_{n,1}\right)$, the bilipschitz diffeomorphism~$\phi_x$ and the
induction hypothesis for~$X_{n-2}$ show the intrinsic and extrinsic
metrics are bilipschitz equivalent on~$U_x \cap X_n$.  Arguing by
compactness as before, we obtain the bilipschitz property for a
sufficiently small neighborhood~$U_\epsilon\! \left(\,\overline
X_{n,2}\right)$ of~$\overline X_{n,2}$ in~$X_n$.  Here\,~$\epsilon$\,
may have to be chosen smaller than the one chosen for~$X_{n,1}$.

We proceed in this way until we obtain the bilipschitz property for a
neighborhood of~$X_{n,n-2}$ in the determinantal variety~$X_n$.  By
compactness, the bilipschitz property holds for~$X_n$ itself.

\section
{Bilipschitz property for the set of matrices of positive determinant}
\label{s5}

We prove Theorem\;\ref{tm:main} by pushing a path in the determinantal
variety out into the component~$\emph{GL}^+(n,\R)$, and mimicking the
proof of Theorem~\ref{s35}.

If the path~$\gamma$ passes via the apex~$O\in X$, it can be replaced
by a pair of straight line segments and pushed out into~$C$ as in
Section~\ref{s2}.

Otherwise let~$k\geq1$ be the least rank of a matrix~$\gamma(t)$ along
the path, and let~$a_k\leq b_k\in[0,1]$ be respectively the first and
last occurrences of a matrix of rank~$k$.  As in Section~\ref{s3}, we
push the path~$\gamma([a_k,b_k])$ into~$X_{n,k}\subseteq X$, by
applying the results of Section~\ref{s4b}.  We then push it out into
$X_{n,k+1}$ by following a constant direction, and patch it up at the
endpoints as in Step 3 of the proof of Theorem~\ref{s35}.

The new path is in~$X_{n,k+1}$.  We now choose the corresponding
parameter values~$a_{k+1}, b_{k+1}\in [0,1]$ and proceed inductively.
Thus the path can be pushed out into~$X_{n,n-1}$.  Finally we follow
the normal direction to push the path out into~$\emph{GL}^+(n,\R)$.

\section*{Acknowledgments}
M.~Katz was partially funded by the Israel Science Foundation grant
no.~1517/12.  D.~Kerner was partially supported by the Israel Science
Foundation grant 844/14.  This paper answers a question posed by Asaf
Shachar at MO%
\footnote{See \url{http://mathoverflow.net/questions/222162}}
and we thank him for posing the question.  We are grateful to Yves
Cornulier for a helpful comment posted there.  We are grateful to Jake
Solomon for providing the proof in Section~\ref{s4b} of the general
case of the bilipschitz property for the determinantal variety.  We
thank Jason Starr for a helpful comment posted at MO.%
\footnote{See \url{http://mathoverflow.net/questions/230668}}
We thank Amitai Yuval for pointing out a gap in an earlier version of
the article, and Alik Nabutovsky and Kobi Peterzil for useful
suggestions.

 \end{document}